
\documentclass[10pt]{amsart}
\usepackage{amsmath,amssymb,latexsym,soul,cite,amsthm,color,enumitem,graphicx,tikz, mathtools,microtype}
\usepackage[colorlinks=true,urlcolor=burgundy,citecolor=burgundy,linkcolor=burgundy,linktocpage,pdfpagelabels,bookmarksnumbered,bookmarksopen]{hyperref}
\definecolor{burgundy}{rgb}{0.5, 0.0, 0.13}\usepackage[english]{babel}
\usepackage[hyperpageref]{backref}
\usepackage[left=3.2cm,right=3.2cm,top=2.9cm,bottom=2.9cm]{geometry}

\numberwithin{equation}{section}

\newtheorem{thm}{Theorem}[section]
\theoremstyle{plain}
\newtheorem{lem}[thm]{Lemma}
\theoremstyle{plain}
\newtheorem{prop}[thm]{Proposition}
\theoremstyle{plain}

\newtheorem{definition}[thm]{Definition}
\theoremstyle{definition}
\newtheorem{rem}[thm]{Remark}
\newtheorem{ex}[thm]{Example}

\newcommand{\N}{{\mathbb N}}

\newcommand{\R}{{\mathbb R}}
\newcommand{\eps}{\varepsilon}
\newcommand{\beq}{\begin{equation}}
\newcommand{\eeq}{\end{equation}}

\renewcommand{\le}{\leqslant}
\renewcommand{\ge}{\geqslant}

\newenvironment{enumroman}{\begin{enumerate}

}{\end{enumerate}}

\DeclareMathOperator*{\esssup}{ess\, sup}
\DeclareMathOperator*{\essinf}{ess\, inf}

\title[Variational methods for partial differential inclusions]{Some reflections on variational methods for\\ partial differential inclusions}

\author[A.\ Iannizzotto]{Antonio Iannizzotto}

\address{Department of Mathematics and Computer Science
\newline\indent
University of Cagliari
\newline\indent
Viale L. Merello 92, 09123 Cagliari, Italy}
\email{antonio.iannizzotto@unica.it}

\subjclass[2010]{49J24; 49J52; 54C60.}
\keywords{Partial differential inclusions; variational methods; non-smooth analysis.}

\begin{document}

\begin{abstract}
We discuss the application of variational methods, based on non-smooth critical point theory, to a general class of partial differential inclusions.
\end{abstract}

\maketitle

\begin{center}
Version of \today\
\end{center}

\section{Introduction}\label{sec1}

\noindent
By a differential inclusion we usually mean a differential problem in which the reaction term (depending on the unknown function $u$ but not on its derivatives) is replaced by a set-valued term $F(u)$. Such problems arise quite naturally when phenomena with a high degree of uncertainty are considered, e.g., in control theory, friction dynamics, or differential game theory. The theory of differential inclusions has known an explosive development in the eighties and is now well-established (for a comprehensive account on the subject, we refer the reader to \cite{AC}, \cite{C}, \cite{HP}).
\vskip2pt
\noindent
Existence results for both ordinary and partial differential inclusions may rely on several methods, such as fixed point theory, Leray-Schauder theory, monotone operators, or approximation schemes. Such results usually require that the set-valued mapping $F$ is either upper semi-continuous or lower semi-continuous, see for instance \cite{BP}, \cite{DP}, \cite{F} (for first-order problems) and \cite{AB}, \cite{CCX}, \cite{EK}, \cite{FG}, \cite{K}, \cite{MM} (for second-order problems). In particular, upper semi-continuity fits with the significant case when $F(u)=[f_-(u),f_+(u)]$ is, pointwisely, the interval between the lower and the upper limit of some discontinuous single-valued mapping $f$, as was first noticed in \cite{F}.
\vskip2pt
\noindent
For second-order inclusions, in particular, variational methods have proved to be a powerful technique especially suitable to producing existence and multiplicity results, requiring no monotonicity assumption. In \cite{C1} a variational framework for elliptic partial second-order inclusions involving set-valued mappings of the type $F(u)=[f_-(u),f_+(u)]$ described above was established, based on the non-smooth critical point theory for locally Lipschitz continuous functionals developed in \cite{C2} (see also \cite{GP}). Then, the variational approach was extended in \cite{F1}, \cite{KRT} to the case of general set-valued reactions. We recall here some recent contribution in this direction, given in \cite{BIM}, \cite{CIM}, \cite{I2}, \cite{I}, \cite{I1}, \cite{IMM}, \cite{IP}, \cite{JS}, \cite{PP}. Most results of this type focus on the case when $F(u)=\partial J(u)$ is defined as the generalized gradient of some locally Lipschitz continuous potential $J$ -- a restriction that we shall avoid here.
\vskip2pt
\noindent
In this note we present a variational formulation for the following (partial or ordinary) differential inclusion of elliptic type with homogeneous Dirichlet boundary conditions:
\beq\label{pdi}
\begin{cases}
-\Delta u\in F(u) & \text{in $\Omega$} \\
u=0 & \text{on $\partial\Omega$.}
\end{cases}
\eeq
Here $\Omega\subset\R^N$ ($N\ge 1$) is a bounded domain with a $C^2$ boundary $\partial\Omega$, and $F:\R\to 2^\R$ is an upper semi-continuous set-valued mapping with compact, convex values, satisfying the following growth condition for convenient $a>0$, $p\in(1,2^*)$:
\beq\label{gc}
|\xi|\le a(1+|s|^{p-1}) \ \text{for all $s\in\R$, $\xi\in F(s)$}
\eeq
(we recall that $2^*=\frac{2N}{N-2}$ if $N>2$, $2^*=+\infty$ if $N\le 2$). Clearly, if $N=1$, then the Laplacian just reads as $u''$. Our treatment of problem \eqref{pdi} aims at including and, to some extent, simplifying the previous ones by using the smallest possible amount of set-valued and non-smooth analysis.
\vskip2pt
\noindent
The structure of the paper is the following: in Section \ref{sec2} we recall some well-known facts of set-valued and non-smooth analysis; in Section \ref{sec3} we introduce an energy functional and a variational framework for our problem; in Section \ref{sec4} we discuss some technical questions related to our method, including an optimality problem; and in Section \ref{sec5} we present an example dealing with a special case of problem \eqref{pdi}.

\begin{rem}
The variational approach that we are going to describe also fits with slightly different versions of problem \eqref{pdi}, e.g., with the $p$-Laplacian operator on the left-hand side, or Neumann boundary conditions, and even with non-autonomous reactions like $F(x,u)$, with the necessary adaptations.
\end{rem}

\section{Some recalls of set-valued and non-smooth analysis}\label{sec2}

\noindent
We recall some basic notions from set-valued analysis (for details see \cite{AF}, \cite{HP}). Let $X$, $Y$ be topological spaces, $F:X\to 2^Y$ be a set-valued mapping. $F$ is {\em upper semicontinuous (u.s.c.)} if, for any open set $A\subseteq Y$, the set
\[F^+(A)=\{x\in X:\,F(x)\subseteq A\}\]
is open in $X$. In the case $X=Y=\R$, upper semi-continuity of a certain class of set-valued mappings can be characterized as follows:

\begin{lem}\label{usc-cc}
If $F:\R\to 2^\R$ is a set-valued mapping with compact convex values, then the following are equivalent:
\begin{enumroman}
\item\label{usc-cc1} $F$ is u.s.c.;
\item\label{usc-cc2} $\min F,\,\max F:\R\to\R$ are l.s.c., u.s.c.\ respectively as single-valued mappings.
\end{enumroman}
\end{lem}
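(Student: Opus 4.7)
The plan is to exploit the fact that compact convex subsets of $\R$ are precisely the closed bounded intervals, so that for every $s\in\R$ we can write $F(s)=[\min F(s),\max F(s)]$. Both implications then reduce to translating semi-continuity of $F$ into control, from above or below, of the two endpoints of this interval.

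For \ref{usc-cc1}$\Rightarrow$\ref{usc-cc2} I would test the u.s.c.\ hypothesis against two one-sided open sets. Fix $s_0\in\R$ and $\varepsilon>0$. The set $A=(-\infty,\max F(s_0)+\varepsilon)$ is open and contains $F(s_0)$, so $s_0\in F^+(A)$; by u.s.c., $F^+(A)$ is open, hence there is $\delta>0$ with $\max F(s)<\max F(s_0)+\varepsilon$ for all $|s-s_0|<\delta$. This yields u.s.c.\ of $\max F$ at $s_0$. Applying the same argument to $A=(\min F(s_0)-\varepsilon,+\infty)$ delivers l.s.c.\ of $\min F$ at $s_0$.

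For \ref{usc-cc2}$\Rightarrow$\ref{usc-cc1} let $A\subseteq\R$ be open and $s_0\in F^+(A)$. Since $F(s_0)=[\min F(s_0),\max F(s_0)]$ is compact and contained in the open set $A$, there exists $\eta>0$ such that
\[
\bigl(\min F(s_0)-\eta,\,\max F(s_0)+\eta\bigr)\subseteq A.
\]
By l.s.c.\ of $\min F$ and u.s.c.\ of $\max F$ at $s_0$, I can find $\delta>0$ such that for every $s$ with $|s-s_0|<\delta$ one has $\min F(s)>\min F(s_0)-\eta$ and $\max F(s)<\max F(s_0)+\eta$ simultaneously; consequently $F(s)\subseteq A$, i.e.\ $(s_0-\delta,s_0+\delta)\subseteq F^+(A)$. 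Hence $F^+(A)$ is open and $F$ is u.s.c.

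The only non-routine step is extracting the $\eta$-enlargement in the second implication: this is exactly where the compactness of the values of $F$ (and not merely their closedness) is used, since an open neighborhood of a closed unbounded convex set need not contain any uniform fattening of it. The rest is a straightforward unpacking of the definitions.
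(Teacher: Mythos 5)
Your proof is correct and follows essentially the same route as the paper's: the forward implication tests upper semi-continuity against one-sided open sets, and the converse uses compactness and convexity of the values to fit $F(s_0)$ inside a bounded open interval contained in $A$. The only difference is presentational — you argue pointwise in $\varepsilon$--$\delta$ form where the paper works with open super-level sets and writes $F^+(A)$ as a union of the sets $F^+(I)$ over bounded open intervals $I\subseteq A$.
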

\begin{proof}
We first prove that \ref{usc-cc1} implies \ref{usc-cc2}. Fix $M\in\R$. The super-level set
\[\{s\in\R:\,\min F(s)>M\}=F^+(M,\infty)\]
is open, hence $\min F$ is l.s.c. In a similar way we prove that $\max F$ is u.s.c.
\vskip2pt
\noindent
Now we prove that \ref{usc-cc2} implies \ref{usc-cc1}. Let $I\subset\R$ be a bounded open interval. Then, the set
\[F^+(I)=\{s\in\R:\,\max F(s)<\sup I\}\cap\{s\in\R:\,\min F(s)>\inf I\}\]
is open. Now, let $A\subseteq\R$ be an open set. We denote by $\mathcal{I}$ the family of bounded open intervals $I\subseteq A$, hence clearly
\[\bigcup_{I\in\mathcal{I}}I=A.\]
For all $s\in F^+(A)$, the set $F(s)\subset A$ is convex and compact, hence there is $I\in\mathcal{I}$ s.t.\ $F(s)\subset I$. So the set
\[F^+(A)=\bigcup_{I\in\mathcal{I}}F^+(I)\]
is open, and $F$ turns out to be u.s.c.
\end{proof}

\noindent
A single-valued mapping $f:X\to Y$ is a {\em selection} of $F$ if $f(x)\in F(x)$ for all $x\in X$. As a special case of \cite[Theorem 8.1.3]{AF} we have the following selection theorem:

\begin{thm}\label{sel}
Let $F:\R\to 2^{\R}$ be u.s.c. Then there exists a Borel measurable selection $f:\R\to\R$ of $F$.
\end{thm}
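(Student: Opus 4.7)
The plan is to exhibit an explicit Borel measurable selection, rather than appealing to the full strength of \cite[Theorem 8.1.3]{AF}. The natural candidate is already made available by the previous lemma, namely
\[
f(s) = \min F(s),
\]
which is well defined since the values of $F$ are (non-empty) compact subsets of $\R$ in the setting of interest, i.e.\ the one underlying \eqref{gc}.

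First I would invoke Lemma \ref{usc-cc}\ref{usc-cc2}: because $F$ is u.s.c.\ with compact convex values, the single-valued mapping $f = \min F$ is lower semi-continuous on $\R$. Second, I would recall the elementary fact that every l.s.c.\ function $\R \to \R$ is Borel measurable: each super-level set $\{s\in\R:\,f(s)>M\}$ is open, and the collection of rays $(M,\infty)$ generates the Borel $\sigma$-algebra of $\R$, whence $f^{-1}(B)$ is Borel for every Borel $B\subseteq\R$. Since $f(s)\in F(s)$ for every $s$ by construction, $f$ is the required Borel measurable selection.

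There is essentially no technical obstacle, and the argument is a bit more flexible than it looks: inspection of the proof of Lemma \ref{usc-cc} shows that the implication \ref{usc-cc1}$\Rightarrow$\ref{usc-cc2} never uses convexity of the values, so the same proof goes through as soon as $F$ has non-empty compact values. Convexity is only needed in the reverse implication of Lemma \ref{usc-cc}, which is not invoked here. If one wished instead to produce the maximal selection, the completely symmetric argument based on the upper semi-continuity of $\max F$ would work just as well.
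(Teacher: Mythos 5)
Your argument is correct, and it takes a genuinely different route from the paper: the paper gives no proof at all, simply quoting the statement as a special case of \cite[Theorem 8.1.3]{AF} (a measurable-selection theorem for set-valued maps with closed values), whereas you construct the selection explicitly as $f=\min F$, using the implication \ref{usc-cc1}$\Rightarrow$\ref{usc-cc2} of Lemma \ref{usc-cc} and the elementary fact that l.s.c.\ functions are Borel. This is more self-contained and, as a bonus, produces exactly the extremal selection $f_1=\min F$ that the paper lists later in Section \ref{sec4}; you are also right that convexity of the values plays no role in the direction of Lemma \ref{usc-cc} you invoke. The one point to be explicit about is that the theorem as stated assumes only that $F$ is u.s.c., with no hypothesis on its values, while your proof needs $F(s)$ non-empty and compact for $\min F(s)$ to be defined. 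This costs nothing in practice --- without non-empty closed values the statement cannot hold anyway (if some $F(s)=\emptyset$ there is no selection of any kind), and everywhere the theorem is used in the paper $F$ has compact convex values --- but you should either add that hypothesis to your version of the statement or say explicitly that you prove it in the only form in which it is applied. What the paper's citation buys is generality (closed, unbounded, non-convex values, and more general range spaces); what your construction buys is transparency and independence from the heavier selection machinery.
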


\noindent
If $F:\R\to 2^\R$, the Aumann-type integral is a defined by
\beq\label{aumann}
\int_0^s F(\sigma)\,d\sigma=\Big\{\int_0^s f(\sigma)\,d\sigma:\,f:\R\to\R \ \text{measurable selection of $F$}\Big\}
\eeq
(note that, if $F$ is u.s.c.\ with compact convex values, then the integral is well defined, as both $\min F$ and $\max F$ are Borel measurable by Lemma \ref{usc-cc}, and it has closed convex values by \cite[Theorems 8.6.3, 8.6.4]{AF}).
\vskip2pt
\noindent
Now we recall some notions of nonsmooth critical point theory (for details see \cite{C2}, \cite{GP}). Let $(X,\|\cdot\|)$ be a Banach space, $(X^*,\|\cdot\|_*)$ be its topological dual, and $\varphi:X\to\R$ be a functional. $\varphi$ is said to be {\em locally Lipschitz continuous} if for every $u\in X$ there exist a neighborhood $U$ of $u$ and $L>0$ such that 
\[|\varphi(v)-\varphi(w)|\leq L\|v-w\| \ \mbox{for all $v,w\in U$.}\]
The {\em generalized directional derivative} of $\varphi$ at $u$ along $v\in X$ is
\[\varphi^\circ(u;v)=\limsup_{w\to u\above 0pt t\to 0^+}\frac{\varphi(w+tv)-\varphi(w)}{t}.\]
The {\em generalized gradient} (or sub-differential) of $\varphi$ at $u$ is the set
\[\partial\varphi(u)=\left\{u^*\in X^*:\, \langle u^*,v\rangle\leq\varphi^\circ(u;v) \ \mbox{for all $v\in X$}\right\}.\]
The following lemma presents some basic properties of the generalized gradient:

\begin{lem}\label{gg}
If $\varphi,\psi:X\to\R$ are locally Lipschitz continuous, then
\begin{enumroman}
\item\label{gg1} $\partial\varphi(u)$ is convex, closed and weakly$^*$ compact for all $u\in X$;
\item\label{gg2} $\partial\varphi:X\to 2^{X^*}$ is an upper semicontinuous set-valued mapping with respect to the weak$^*$ topology on $X^*$;
\item\label{gg3} if $\varphi\in C^1(X)$, then $\partial\varphi(u)=\{\varphi'(u)\}$ for all $u\in X$;
\item\label{gg4} $\partial(\lambda\varphi)(u)=\lambda\partial\varphi(u)$ for all $\lambda\in\R$, $u\in X$;
\item\label{gg5} $\partial(\varphi+\psi)(u)\subseteq\partial\varphi(u)+\partial\psi(u)$ for all $u\in X$;
\item\label{gg6} for all $u,v\in X$ there exists $u^*\in\partial\varphi(u)$ such that  $u^*(v)=\varphi^\circ(u;v)$;
\item\label{gg7} if $u$ is a local minimizer (or maximizer) of $\varphi$, then $0\in\partial\varphi(u)$.
\end{enumroman}
\end{lem}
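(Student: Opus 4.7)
The whole argument rests on two preliminary properties of the generalized directional derivative that I would establish first: for each $u$ the map $v\mapsto\varphi^\circ(u;v)$ is sublinear (positively homogeneous and subadditive), and for each fixed $v$ the map $u\mapsto\varphi^\circ(u;v)$ is upper semi-continuous. Both follow from a careful inspection of the defining $\limsup$ together with the local Lipschitz bound $|\varphi(w+tv)-\varphi(w)|\le Lt\|v\|$, which yields also $|\varphi^\circ(u;v)|\le L\|v\|$ on a neighbourhood of the reference point.

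From here the routine items fall into line. For \ref{gg1}, the bound above forces $\|u^*\|_*\le L$ for every $u^*\in\partial\varphi(u)$, while convexity and weak$^*$-closedness are visible directly from the defining inequality; Banach--Alaoglu then delivers weak$^*$-compactness. For \ref{gg3}, the mean value theorem collapses the $\limsup$ to $\langle\varphi'(u),v\rangle$ when $\varphi\in C^1(X)$, and a single functional exhausts the sub-differential. For \ref{gg4}, I would split according to the sign of $\lambda$: when $\lambda<0$ a change of variables $w\mapsto w-tv$ gives the identity $(\lambda\varphi)^\circ(u;v)=-\lambda\varphi^\circ(u;-v)$, from which the translation of the defining inequality yields $\partial(\lambda\varphi)(u)=\lambda\partial\varphi(u)$. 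For \ref{gg7}, at a local minimizer one has $\varphi(u+tv)-\varphi(u)\ge 0$ for small $t>0$, so $\varphi^\circ(u;v)\ge 0$ for every $v$ and $0\in\partial\varphi(u)$; local maximizers reduce to this via \ref{gg4} with $\lambda=-1$.

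Items \ref{gg2} and \ref{gg6} require a little more. For \ref{gg2}, I would take $u_n\to u$ in $X$ and $u_n^*\in\partial\varphi(u_n)$ with $u_n^*\xrightarrow{w^*}u^*$, and pass to the limit in $\langle u_n^*,v\rangle\le\varphi^\circ(u_n;v)$ using the upper semi-continuity of $\varphi^\circ(\cdot;v)$; the uniform bound from \ref{gg1} makes the sequential characterization of weak$^*$ upper semi-continuity sufficient. For \ref{gg6}, the plan is a Hahn--Banach extension: the linear form $tv\mapsto t\varphi^\circ(u;v)$ on $\R v$ is dominated by the sublinear majorant $\varphi^\circ(u;\cdot)$ (the check on negative multiples uses $\varphi^\circ(u;-v)\ge-\varphi^\circ(u;v)$, itself a consequence of subadditivity), so it extends to a linear functional on $X$, which by the same Lipschitz bound sits in $X^*$ and hence in $\partial\varphi(u)$.

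The main obstacle is the sum rule \ref{gg5}. The easy step is the pointwise inequality $(\varphi+\psi)^\circ(u;v)\le\varphi^\circ(u;v)+\psi^\circ(u;v)$; the delicate point is to upgrade it to the set inclusion. Here I would rely on the support-function characterization: by \ref{gg6}, the support function of $\partial\varphi(u)$ is exactly $v\mapsto\varphi^\circ(u;v)$, and similarly for $\psi$. The set $\partial\varphi(u)+\partial\psi(u)$ is convex and weak$^*$-compact (as the sum of two such sets), hence weak$^*$-closed, and any prospective $u^*\in\partial(\varphi+\psi)(u)$ outside it could be strictly separated in $(X^*,w^*)$ by some weak$^*$-continuous linear functional, necessarily of the form $u^*\mapsto\langle u^*,v\rangle$ for some $v\in X$; this would contradict the pointwise inequality. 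The subtle point is that the separation must be carried out in the weak$^*$-topology so that the separating functional is realised by an actual element $v\in X$, and this is exactly where the weak$^*$-compactness secured in \ref{gg1} is indispensable.
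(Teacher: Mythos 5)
The paper does not prove this lemma at all: it is stated as a collection of recalled facts, with the reader referred to Clarke \cite{C2} and Gasi\'nski--Papageorgiou \cite{GP}. Your outline essentially reconstructs the standard proofs from those sources, and it is correct in all the main points: the sublinearity and upper semi-continuity of $(u,v)\mapsto\varphi^\circ(u;v)$ together with the bound $|\varphi^\circ(u;v)|\le L\|v\|$ do drive \ref{gg1}, \ref{gg3}, \ref{gg4} and \ref{gg7}; the Hahn--Banach extension of $tv\mapsto t\varphi^\circ(u;v)$ dominated by $\varphi^\circ(u;\cdot)$ (with the check $-\varphi^\circ(u;v)\le\varphi^\circ(u;-v)$ coming from subadditivity at $0$) is exactly how \ref{gg6} is proved; and your separation argument for \ref{gg5} is the right one, since \ref{gg6} identifies $\varphi^\circ(u;\cdot)$ as the support function of $\partial\varphi(u)$, the sum of two convex weak$^*$-compact sets is convex and weak$^*$-compact, and the continuous dual of $(X^*,w^*)$ consists precisely of the evaluations at points of $X$, so the separating functional is realised by some $v\in X$ and contradicts $(\varphi+\psi)^\circ(u;v)\le\varphi^\circ(u;v)+\psi^\circ(u;v)$.

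The one step you state too quickly is \ref{gg2}. The weak$^*$ topology on the ball $\{u^*:\|u^*\|_*\le L\}$ is metrizable only when $X$ is separable, so for a general Banach space the ``sequential characterization of weak$^*$ upper semi-continuity'' is not available: you should argue by contradiction with a subnet. Concretely, if $\partial\varphi$ were not u.s.c.\ at $u$, there would be a weak$^*$-open $A\supseteq\partial\varphi(u)$ and $u_n\to u$ with $u_n^*\in\partial\varphi(u_n)\setminus A$; the $u_n^*$ eventually lie in a fixed weak$^*$-compact ball, a subnet converges weak$^*$ to some $u^*\notin A$, and passing to the limit in $\langle u_n^*,v\rangle\le\varphi^\circ(u_n;v)$ along the subnet, using the upper semi-continuity of $\varphi^\circ(\cdot;v)$, gives $u^*\in\partial\varphi(u)\subseteq A$, a contradiction. (In the setting of the paper, $X=H^1_0(\Omega)$ is separable, so your sequential version does suffice there; but the lemma is stated for an arbitrary Banach space.) With that repair the outline is a complete and faithful rendering of the standard arguments.
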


\noindent
By Lemma \ref{gg} \ref{gg1}, we may define for all $u\in X$
\begin{equation}\label{m}
m_\varphi(u)=\min_{u^*\in\partial\varphi(u)}\|u^*\|_*,
\end{equation}
We say that $u\in X$ is a {\em (generalized) critical point} of $\varphi$ if $m_\varphi(u)=0$ (i.e. $0\in\partial\varphi(u)$). The set of all critical points of $\varphi$ is denoted by $K(\varphi)$, while $K_c(\varphi)$ is the set of all critical points $u$ s.t.\ $\varphi(u)=c$ ($c\in\R$). We recall the non-smooth {\em Palais-Smale condition}, where $(u_n)$ denotes a sequence in $X$:
\beq
\tag{PS}\label{ps} \text{$(\varphi(u_n))$ bounded, $m_\varphi(u_n)\to 0$} \ \Longrightarrow \ \text{$(u_n)$ has a convergent subsequence.}
\eeq
Finally, we recall the non-smooth version of the mountain pass theorem (see \cite[Theorem 2.1.3]{GP}):

\begin{thm}\label{mpt}
Let $X$ be a Banach space, $\varphi:X\to\R$ be a locally Lipschitz functional satisfying \eqref{ps}, $\bar u\in X$, $0<r<\|\bar u\|$ be s.t.
\[\max\{\varphi(0),\varphi(\bar u)\}\le \inf_{\|u\|=r}\varphi(u),\]
and let
\[\Gamma=\{\gamma\in C([0,1],X):\,\gamma(0)=0,\,\gamma(1)=\bar u\},\]
\[c=\inf_{\gamma\in\Gamma}\,\max_{t\in[0,1]}\varphi(\gamma(t)).\]
Then, $c\ge\inf_{\|u\|=r}\varphi(u)$ and $K_c(\varphi)\neq\emptyset$.
\end{thm}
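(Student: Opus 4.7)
The plan is to follow the classical minimax scheme, adapted to the non-smooth setting by replacing the negative gradient flow with a flow driven by a pseudo-gradient vector field that is compatible with the generalized subdifferential.

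First I would establish the easy inequality $c\ge\inf_{\|u\|=r}\varphi(u)$ by a purely topological argument: since $\|\bar u\|>r$ and $\gamma(0)=0$, every path $\gamma\in\Gamma$ must, by continuity of $t\mapsto\|\gamma(t)\|$ and the intermediate value theorem, cross the sphere $\{u\in X:\,\|u\|=r\}$ at some $t_\gamma\in(0,1)$; hence $\max_{t\in[0,1]}\varphi(\gamma(t))\ge\varphi(\gamma(t_\gamma))\ge\inf_{\|u\|=r}\varphi(u)$ for every $\gamma\in\Gamma$, and taking the infimum gives the claim.

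For the non-emptiness of $K_c(\varphi)$, I would argue by contradiction, assuming $K_c(\varphi)=\emptyset$. A standard application of \eqref{ps} yields $\eps_0,\delta>0$ such that $m_\varphi(u)\ge\delta$ on the strip $S=\{u\in X:\,|\varphi(u)-c|\le 2\eps_0\}$, for otherwise one could extract from an almost-critical sequence at level $c$ a convergent subsequence whose limit would belong to $K_c(\varphi)$, using the lower semicontinuity of $m_\varphi$ that follows from Lemma \ref{gg} \ref{gg2}. Next, on $S$ I would construct a locally Lipschitz pseudo-gradient field $V:S\to X$ satisfying $\|V(u)\|\le 1$ and $\langle u^*,V(u)\rangle\ge\delta/2$ for every $u^*\in\partial\varphi(u)$. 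Locally around each $u\in S$, Lemma \ref{gg} \ref{gg6} together with the definition \eqref{m} of $m_\varphi$ provides a unit vector $v_u\in X$ that almost realizes this bound, and the weak$^*$ upper semicontinuity of $\partial\varphi$ makes the bound persist on a neighborhood of $u$; a locally finite partition of unity then glues the local choices into the global $V$.

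Using $V$, one solves the Cauchy problem $\dot\eta(t,u)=-V(\eta(t,u))$, $\eta(0,u)=u$ on a cut-off extension of $V$ supported in $S$, obtaining a continuous deformation $\eta:[0,T]\times X\to X$ such that along trajectories inside $S$, $\frac{d}{dt}\varphi(\eta(t,u))\le-\delta/2$; this gives the standard deformation property $\eta(T,\{\varphi\le c+\eps\})\subseteq\{\varphi\le c-\eps\}$ for some $\eps\in(0,\eps_0)$ and $T>0$, while $\eta(t,\cdot)$ is the identity outside $S$ so that $\eta(t,0)=0$ and $\eta(t,\bar u)=\bar u$. Choosing $\gamma\in\Gamma$ with $\max_t\varphi(\gamma(t))<c+\eps$ and composing with $\eta(T,\cdot)$ gives a new path in $\Gamma$ whose maximum on $[0,1]$ is strictly less than $c$, contradicting the definition of $c$.

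The main obstacle is the pseudo-gradient construction in the third step, which substitutes for the missing Hilbert/$C^1$ structure: one must produce a single $V(u)\in X$ dualizing the whole weakly$^*$ compact convex set $\partial\varphi(u)\subset X^*$ uniformly, and arrange Lipschitz dependence on $u$. Everything else, in particular the deformation and the minimax conclusion, is a routine adaptation of the smooth Mountain Pass argument once $V$ is in hand.
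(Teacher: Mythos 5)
First, a point of comparison: the paper does not prove Theorem \ref{mpt} at all --- it is a recalled result, quoted from \cite[Theorem 2.1.3]{GP} --- so there is no internal proof to measure your argument against. Your outline is the standard deformation-lemma proof of the non-smooth mountain pass theorem, and your first step is fine: every $\gamma\in\Gamma$ crosses the sphere $\{u:\|u\|=r\}$ by the intermediate value theorem applied to $t\mapsto\|\gamma(t)\|$, which gives $c\ge\inf_{\|u\|=r}\varphi(u)$ exactly as you say.

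There are two problems in the second half. The lesser one is the justification of the pseudo-gradient: Lemma \ref{gg} \ref{gg6} says $\varphi^\circ(u;v)=\max_{u^*\in\partial\varphi(u)}\langle u^*,v\rangle$, i.e.\ it controls the \emph{maximum} of $\langle\cdot,v\rangle$ over the subdifferential, whereas you need one direction $v_u$ with $\min_{u^*\in\partial\varphi(u)}\langle u^*,v_u\rangle\ge\delta/2$. That requires a separation or minimax argument (e.g.\ Sion's theorem applied to $(v,u^*)\mapsto\langle u^*,v\rangle$ on the unit ball of $X$ times the convex weak$^*$-compact set $\partial\varphi(u)$, yielding $\sup_{\|v\|\le 1}\min_{u^*}\langle u^*,v\rangle=m_\varphi(u)$); citing \ref{gg6} alone does not produce $v_u$. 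This is repairable, and you rightly flag it as the crux. The genuine gap is the limit case allowed by the non-strict inequality in the hypothesis: if $c=\max\{\varphi(0),\varphi(\bar u)\}$ (possible, since only $\max\{\varphi(0),\varphi(\bar u)\}\le\inf_{\|u\|=r}\varphi\le c$ is assumed), then $0$ or $\bar u$ lies in every strip $S=\{|\varphi-c|\le 2\eps_0\}$, and your deformation cannot simultaneously be the identity at the endpoints (needed for $\eta(T,\cdot)\circ\gamma$ to belong to $\Gamma$) and map $\{\varphi\le c+\eps\}$ into $\{\varphi\le c-\eps\}$ (which would force $\varphi(\eta(T,0))\le c-\eps<\varphi(0)$). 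Your argument therefore proves the theorem only when $c>\max\{\varphi(0),\varphi(\bar u)\}$; the limit case needs a different device, such as a deformation constrained to the sphere $\|u\|=r$ or an Ekeland-type argument in the spirit of Pucci--Serrin and Ghoussoub--Preiss showing $K_c\cap\{u:\|u\|=r\}\neq\emptyset$.
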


\section{A variational framework: the 'shrinking' method}\label{sec3}

\noindent
In this section we introduce a variational formulation for problem \eqref{pdi}. Our approach follows that of \cite{KRT}, the main difference being that we do not prescribe the choice of a special selection. We will make use of the Sobolev space $H^1_0(\Omega)$, endowed with the norm $\|u\|=\|\nabla u\|_2$ (by $\|\cdot\|_\nu$ we denote the usual norm of $L^\nu(\Omega)$, for any $\nu\in[1,\infty]$).
\vskip2pt
\noindent
We begin by defining a notion of solution:

\begin{definition}\label{sol}
We say that $u\in H^1_0(\Omega)$ is a (weak) solution of \eqref{pdi}, if there exists $w\in L^\nu(a,b)$ ($\nu>1$) s.t.
\begin{enumroman}
\item\label{sol1} $\displaystyle\int_\Omega\nabla u\cdot\nabla v\,dx=\int_\Omega wv\,dx$ for all $v\in H^1_0(\Omega)$;
\item\label{sol2} $w(x)\in F(u(x))$ for a.e. $x\in\Omega$.
\end{enumroman}
\end{definition}

\noindent
Definition \ref{sol} above is quite natural, although it does not lead to a regularity theory, as in general the function $w\in L^\nu(\Omega)$ remains undetermined.
\vskip2pt
\noindent
We will now reduce problem \eqref{pdi} to a typically variational one (that was studied in \cite{C1}), by 'shrinking' pointwise the set $F(u)$ to a smaller interval, which happens to be the gradient of a convenient locally Lipschitz continuous potential. We will seek solutions to this reduced inclusion problem, which turn out to solve \eqref{pdi} as well.
\vskip2pt
\noindent
By Theorem \ref{sel} there exists a Borel measurable mapping $f:\R\to\R$ s.t.\ $f(s)\in F(s)$ for all $s\in\R$. By \eqref{gc}, we clearly have $f\in L^\infty_{\rm loc}(\R)$, so we can define a locally Lipschitz continuous potential $J_f:\R\to\R$ by setting for all $s\in\R$
\[J_f(s)=\int_0^s f(\sigma)\,d\sigma.\]
From \cite[Example 2.2.5]{C2} we see that the gradient of $J_f$ at any $s\in\R$ is given by
\beq\label{grad}
\partial J_f(s)=[f_-(s),f_+(s)],
\eeq
where we have set
\[f_-(t)=\lim_{\delta\to 0^+}\essinf_{|t-s|<\delta}f(t), \ f_+(t)=\lim_{\delta\to 0^+}\esssup_{|t-s|<\delta}f(t).\]
We can now define an energy functional for problem \eqref{pdi} by setting for all $u\in H^1_0(\Omega)$
\beq\label{phi}
\varphi(u)=\frac{\|\nabla u\|_2^2}{2}-\int_\Omega J_f(u)\,dx.
\eeq

\begin{prop}\label{vf}
The functional $\varphi$ defined in \eqref{phi} is locally Lipschitz continuous in $H^1_0(\Omega)$. Moreover, if $u\in K(\varphi)$, then $u$ is a solution of problem \eqref{pdi}.
\end{prop}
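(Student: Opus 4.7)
My plan splits along the two assertions of the proposition. To show that $\varphi$ is locally Lipschitz continuous on $H^1_0(\Omega)$, I would write $\varphi=\varphi_1-I$, where $\varphi_1(u)=\|\nabla u\|_2^2/2$ is of class $C^1$ and thus locally Lipschitz, and $I(u)=\int_\Omega J_f(u)\,dx$. For $I$, the growth condition \eqref{gc} gives $|f(s)|\le a(1+|s|^{p-1})$, so by the absolute continuity of $J_f$ and the mean value theorem,
\[|J_f(u(x))-J_f(v(x))|\le a\bigl(1+|\theta(x)|^{p-1}\bigr)|u(x)-v(x)|,\]
for some $\theta(x)$ between $u(x)$ and $v(x)$. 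Applying Hölder's inequality with conjugate exponents $p'=p/(p-1)$ and $p$, together with the continuous Sobolev embedding $H^1_0(\Omega)\hookrightarrow L^p(\Omega)$ (available since $p<2^*$), yields a local Lipschitz estimate of the shape
\[|I(u)-I(v)|\le C\bigl(1+\|u\|^{p-1}+\|v\|^{p-1}\bigr)\|u-v\|.\]

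For the second assertion, I would assume $u\in K(\varphi)$, i.e.\ $0\in\partial\varphi(u)$. By Lemma \ref{gg} parts \ref{gg3}--\ref{gg5},
\[\partial\varphi(u)\subseteq -\Delta u-\partial I(u),\]
where $-\Delta u$ denotes the element of $H^{-1}(\Omega)$ acting by $v\mapsto\int_\Omega\nabla u\cdot\nabla v\,dx$. The key tool now is Clarke's chain rule for integral functionals (see \cite[Theorem 2.7.5]{C2}), which under \eqref{gc} provides the pointwise inclusion
\[\partial I(u)\subseteq\bigl\{w\in L^{p'}(\Omega):\,w(x)\in\partial J_f(u(x)) \text{ for a.e. } x\in\Omega\bigr\}.\]
Combined with \eqref{grad}, this produces $w\in L^{p'}(\Omega)$ with $f_-(u(x))\le w(x)\le f_+(u(x))$ a.e.\ and $\int_\Omega\nabla u\cdot\nabla v\,dx=\int_\Omega wv\,dx$ for every $v\in H^1_0(\Omega)$, matching Definition \ref{sol}\ref{sol1} with $\nu:=p'>1$.

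It remains to bridge the shrinking gap by showing $[f_-(s),f_+(s)]\subseteq F(s)$ for every $s\in\R$. Since $f$ is a selection of $F$ we have $\min F(t)\le f(t)\le\max F(t)$ pointwise, while Lemma \ref{usc-cc} guarantees that $\min F$ is l.s.c.\ and $\max F$ is u.s.c. Passing to the limit in the essinf/esssup definitions,
\[f_-(s)\ge\lim_{\delta\to 0^+}\inf_{|t-s|<\delta}\min F(t)\ge\min F(s),\]
and symmetrically $f_+(s)\le\max F(s)$; convexity of the interval $F(s)$ closes the gap. Hence $w(x)\in F(u(x))$ a.e., fulfilling Definition \ref{sol}\ref{sol2}.

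I expect the main obstacle to be the second step, namely the careful invocation of Clarke's integral-subgradient formula: the growth control \eqref{gc} has to be used both to secure local Lipschitz continuity of $I$ and to ensure that the representative $w\in\partial I(u)$ actually lies in $L^{p'}(\Omega)$, so that the identity $-\Delta u=w$ makes sense in $H^{-1}(\Omega)$. The shrinking inclusion $[f_-,f_+]\subseteq F$, although conceptually the most delicate feature of the \emph{shrinking method}, is then a short consequence of Lemma \ref{usc-cc}.
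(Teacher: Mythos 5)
Your proposal is correct and follows essentially the same route as the paper: the same splitting of $\varphi$ into the $C^1$ quadratic part and the integral term $\psi(u)=\int_\Omega J_f(u)\,dx$, the same Aubin--Clarke representation of elements of $\partial\psi(u)$ by functions $w\in L^{p'}(\Omega)$ lying pointwise in $\partial J_f(u(x))=[f_-(u(x)),f_+(u(x))]$, and the same semicontinuity-plus-convexity argument via Lemma \ref{usc-cc} to get $[f_-(s),f_+(s)]\subseteq[\min F(s),\max F(s)]=F(s)$. The only cosmetic caveat is that, since $f$ is merely Borel measurable, your ``mean value theorem'' step should be read as the direct estimate $|J_f(u)-J_f(v)|=\big|\int_v^u f(s)\,ds\big|\le a(1+|u|^{p-1}+|v|^{p-1})|u-v|$, which is exactly the bound the paper uses.
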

\begin{proof}
Clearly, since $H^1_0(\Omega)$ is a Hilbert space, the functional $u\mapsto\frac{\|u\|^2}{2}$ is of class $C^1$ (in particular locally Lipschitz continuous) with derivative $A:H^1_0(\Omega)\to H^{-1}(\Omega)$ given for all $u,v\in H^1_0(\Omega)$ by
\[A(u)(v)=\int_\Omega \nabla u\cdot\nabla v\,dx.\]
Set for all $u\in L^p(\Omega)$
\[\psi(u)=\int_\Omega J_f(u)\,dx.\]
From \eqref{gc} we have for all $s\in\R$
\[|f(s)|\le a(1+|s|^{p-1}),\]
which implies
\[|J_f(s)|\le C(1+|s|^p)\]
($C>0$ will denote a constant, whose value may change from line to line). So, $\psi:L^p(\Omega)\to\R$ is well defined. Let us prove that $\psi$ is Lipschitz continuous in any bounded subset of $L^p(\Omega)$. Fix $M>0$, $u,v\in L^p(\Omega)$ s.t. $\|u\|_p,\|v\|_p\le M$, then
\begin{align*}
|\psi(u)-\psi(v)| &\le \int_\Omega\Big|\int_v^u f(s)\,ds\Big|\,dx \\
&\le a\int_\Omega(1+|u|^{p-1}+|v|^{p-1})|u-v|\,dx \\
&\le C(1+\|u\|_p^{p-1}+\|v\|_p^{p-1})\|u-v\|_p \\
&\le C\|u-v\|_p.
\end{align*}
Let $u\in L^p(\Omega)$, $w^*\in\partial\psi(u)$. By \cite[Theorem 4.11]{B} there exists $w\in L^{p'}(\Omega)$ s.t.\ for all $v\in L^p(\Omega)$
\[w^*(v)=\int_\Omega wv\,dx.\]
Moreover, by \cite[Theorem 1.3.10]{GP} we have for a.e. $x\in\Omega$
\[w(x)\in J_f(u(x)).\]
An expressive way to rephrase the above passages is the following formula:
\beq\label{ac}
\partial\psi(u)\subseteq\int_\Omega\partial J_f(u)\,dx,
\eeq
where the integral is defined in the sense of \eqref{aumann}. Since the embedding $H^1_0(\Omega)\hookrightarrow L^p(\Omega)$ is continuous and dense, by \cite[Proposition 1.3.17]{GP} the functional $\psi$ is locally Lipschitz continuous in $H^1_0(\Omega)$ as well, with the same gradient.
\vskip2pt
\noindent
By Lemma \ref{gg} \ref{gg3} - \ref{gg5}, the functional $\varphi:H^1_0(\Omega)\to\R$ is locally Lipschitz continuous and by \eqref{ac} its gradient satisfies at any $u\in H^1_0(\Omega)$ by
\[\partial\varphi(u)\subseteq A(u)-\int_\Omega\partial J_f(u)\,dx.\]
Now, assume that $u\in K(\varphi)$. Then $0\in\partial\varphi(u)$, i.e.,
\[A(u)\in\int_\Omega\partial J_f(u)\,dx.\]
Explicitly, we can find $w\in L^{p'}(\Omega)$ s.t.\ for all $v\in H^1_0(\Omega)$
\[A(u)(v)=\int_\Omega wv\,dx,\]
and $w(x)\in\partial J_f(u(x))$ for a.e. $x\in\Omega$. By virtue of \eqref{grad}, this implies
\beq\label{in}
f_-(u(x))\le w(x)\le f_+(u(x)).
\eeq
Recalling the definition of $f_\pm$ and Lemma \ref{usc-cc}, we have for all $s\in\R$
\[\min F(s)\le f_-(s)\le f_+(s)\le \max F(s),\]
and by convexity we also have
\[F(s)=[\min F(s),\,\max F(s)],\]
so \eqref{in} implies $w(x)\in F(u(x))$ for a.e. $x\in\Omega$. Thus, both conditions of Definition \ref{sol} are satisfied, and $u$ is a solution of \eqref{pdi}.
\end{proof}

\noindent
Moreover, $\varphi$ satisfies a \eqref{ps}-type pre-compactness property:

\begin{lem}\label{pps}
Let $(u_n)$ be a bounded sequence in $H^1_0(\Omega)$ s.t.\ $(\varphi(u_n))$ is bounded and $m_\varphi(u_n)\to 0$. Then, $(u_n)$ has a convergent subsequence.
\end{lem}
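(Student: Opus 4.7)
The plan is to prove this is essentially the compactness half of a Palais--Smale argument: weak convergence comes for free from boundedness in the reflexive space $H^1_0(\Omega)$, and then the Laplacian operator $A$ turns weak convergence into strong convergence once we know $\langle A(u_n), u_n - u\rangle \to 0$. The role of the gradient hypothesis $m_\varphi(u_n)\to 0$ is precisely to supply this.

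First I would extract a subsequence, still denoted $(u_n)$, such that $u_n \rightharpoonup u$ in $H^1_0(\Omega)$ for some $u$. Since $p<2^*$, the Rellich--Kondrachov theorem gives $u_n \to u$ strongly in $L^p(\Omega)$. Next, for each $n$, by Lemma \ref{gg}\ref{gg1} the compact set $\partial\varphi(u_n)$ realizes its minimum norm, so pick $u_n^*\in\partial\varphi(u_n)$ with $\|u_n^*\|_*=m_\varphi(u_n)\to 0$. From the gradient inclusion established in the proof of Proposition \ref{vf}, we may write
\[
u_n^* = A(u_n) - w_n^*, \qquad w_n^*(v) = \int_\Omega w_n v\,dx,
\]
where $w_n \in L^{p'}(\Omega)$ satisfies $w_n(x)\in\partial J_f(u_n(x))=[f_-(u_n(x)),f_+(u_n(x))]$ for a.e.\ $x\in\Omega$.

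The second step is to check that $(w_n)$ is bounded in $L^{p'}(\Omega)$. From the growth condition \eqref{gc} together with \eqref{grad}, we get $|w_n(x)|\le a(1+|u_n(x)|^{p-1})$ a.e., so that
\[
\|w_n\|_{p'}^{p'} \le C\bigl(|\Omega| + \|u_n\|_p^p\bigr),
\]
which is bounded because $(u_n)$ is bounded in $H^1_0(\Omega)\hookrightarrow L^p(\Omega)$. Now test $u_n^*$ with $v=u_n-u$: since $(u_n-u)$ is bounded in $H^1_0(\Omega)$ and $\|u_n^*\|_*\to 0$, we have $\langle u_n^*, u_n - u\rangle \to 0$, and by H\"older's inequality together with $u_n\to u$ in $L^p(\Omega)$ and the $L^{p'}$-boundedness of $(w_n)$,
\[
\Bigl|\int_\Omega w_n(u_n-u)\,dx\Bigr| \le \|w_n\|_{p'}\|u_n-u\|_p \to 0.
\]
Combining these two facts yields $\langle A(u_n), u_n-u\rangle \to 0$.

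To conclude, observe that $u_n\rightharpoonup u$ in $H^1_0(\Omega)$ immediately gives $\langle A(u), u_n-u\rangle \to 0$. Subtracting,
\[
\|\nabla(u_n-u)\|_2^2 = \langle A(u_n)-A(u), u_n-u\rangle \to 0,
\]
so $u_n\to u$ strongly in $H^1_0(\Omega)$. The only delicate point, and the main obstacle worth isolating, is the passage from the \emph{inclusion} $\partial\varphi(u_n)\subseteq A(u_n)-\int_\Omega \partial J_f(u_n)\,dx$ of Proposition \ref{vf} to an honest decomposition $u_n^*=A(u_n)-w_n^*$ for the specific minimal-norm element; this is harmless here because every element on the right-hand side of the inclusion is represented by some $w_n\in L^{p'}(\Omega)$ pointwise in $\partial J_f(u_n)$, which is all we need for the H\"older estimate above.
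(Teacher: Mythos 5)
Your proof is correct and follows essentially the same route as the paper's: extract a weakly convergent subsequence, use the compact embedding into $L^p(\Omega)$, decompose the minimal-norm element of $\partial\varphi(u_n)$ as $A(u_n)-w_n$ with $(w_n)$ bounded in $L^{p'}(\Omega)$ by the growth condition, and test against $u_n-u$ to conclude strong convergence. Your closing remark on passing from the gradient inclusion to the pointwise representation $w_n(x)\in\partial J_f(u_n(x))$ is exactly the step the paper covers by ``reasoning as in Proposition \ref{vf}.''
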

\begin{proof}
By \eqref{m}, for all $n\in\N$ there exists $u_n\in\partial\varphi(u_n)$ s.t. $\|u_n\|_*=m_\varphi(u_n)$. Reasoning as in Proposition \ref{vf}, we have
\[u^*_n=A(u_n)-w_n\]
for some $w_n\in L^{p'}(\Omega)$ satisfying $w_n(x)\in\partial F(u_n(x))$ for a.e. $x\in\Omega$. Due to reflexivity of $H^1_0(\Omega)$ and the compact embedding $H^1_0(\Omega)\hookrightarrow L^p(\Omega)$, passing if necessary to a subsequence we have $u_n\rightharpoonup u$ in $H^1_0(\Omega)$ and $u_n\to u$ in $L^p(\Omega)$ for some $u\in H^1_0(\Omega)$. Besides, by \eqref{gc} we see that $(w_n)$ is bounded in $L^{p'}(\Omega)$.
\vskip2pt
\noindent
By what stated above, we have for all $n\in\N$
\begin{align*}
\|u_n-u\|^2 &= A(u_n)(u_n-u)-A(u)(u_n-u) \\
&= u^*_n(u_n-u)+\int_\Omega w_n(u_n-u)\,dx-A(u)(u_n-u) \\
&\le m_\varphi(u_n)\|u_n-u\|+\|w_n\|_{p'}\|u_n-u\|_p-A(u)(u_n-u),
\end{align*}
and the latter tends to 0 as $n\to\infty$. Thus, $u_n\to u$ in $H^1_0(\Omega)$.
\end{proof}

\section{Some technical remarks}\label{sec4}

\noindent
As seen in Section \ref{sec3}, the key step in order to deal with problem \eqref{pdi} variationally is to 'shrink' the set-valued term $F(u)$ to a generalized gradient, which admits the twofold representation $\partial J_f(u)=[f_-(u),f_+(u)]$. We denote $\mathcal{S}$ the family of all u.s.c.\ set-valued mappings from $\R$ into itself, with compact convex values.
\vskip2pt
\noindent
The first question we shall address in the present section, is whether such 'shrinking' is non-trivial. Indeed, there are mappings in $\mathcal{S}$ which are {\em not} gradients, ad the following example shows.

\begin{ex}
Let $F:\R\to 2^\R$ be defined by putting for all $s\in\R$
\[F(s)=\begin{cases}
\{-1\} & \text{if $s<0$} \\
[-2,2] & \text{if $s=0$} \\
\{1\} & \text{if $s>0$.}
\end{cases}\]
Such mapping is u.s.c.\ with compact convex values, and clearly satisfies \eqref{gc} with any $p>1$. We show that $F$ cannot be a gradient in itself, arguing by contradiction: let $J:\R\to\R$ be locally Lipschitz continuous s.t. $\partial J(s)=F(s)$ for all $s\in\R$. Then, in particular, $J$ has a single-valued gradient at any $s\neq 0$, and by \cite[Theorem 2.5.1]{C2} we should have
\[\partial J(0)=\,\text{convex hull of}\,\{-1,1\}=[-1,1],\]
against $F(0)=[-2,2]$. Then, let us proceed as in Section \ref{sec3} by choosing a Borel-measurable selection $f:\R\to\R$ of $F$, which must be of the following type:
\[f(s)=\begin{cases}
-1 & \text{if $s<0$} \\
\text{any value in $[-2,2]$} & \text{if $s=0$} \\
1 & \text{if $s>0$.}
\end{cases}\]
Its potential (independent of the value at 0) is then $J_f:\R\to\R$ defined for all $s\in\R$ by
\[J_f(s)=|s|,\]
whose gradient coincides wit $F(s)$ at any $s\neq 0$ while $\partial J_f(0)=[-1,1]$.
\end{ex}

\noindent
The second question about the 'shrinking' approach, is whether it can be optimal. To be precise, let us define a partial ordering in the set $\mathcal{S}$, by setting $F\lesssim G$ for any $F,G\in\mathcal{S}$ s.t. $F(s)\subseteq G(s)$ for all $s\in\R$. We know from Section \ref{sec3} that for any $F\in\mathcal{S}$ there exists a locally Lipschitz continuous $J$ s.t. $\partial J\lesssim F$. So, the question is whether gradients of Lipschitz potentials are {\em minimal} elements in $\mathcal{S}$ with respect to the ordering defined above.
\vskip2pt
\noindent
Certain classes of gradients exhibit some extremality property: for instance, gradients of continuous convex potentials are maximal monotone operators (see \cite[Theorem 1.4.1]{GP}); and if $J$ is strictly differentiable at any point, then $\partial J$ is single-valued (see \cite[Proposition 2.2.4]{C2}).
\vskip2pt
\noindent
Nevertheless, the answer is in general negative, as the following simple example shows:

\begin{ex}
Let $F:\R\to 2^\R$ be defined by $F(s)=[0,1]$ for all $s\in\R$. From \cite{R} we know that there exists a Borel-measurable set $A\subset\R$, s.t.\ for all bounded interval $I\subset\R$ both $A\cap I$ and $A^c\cap I$ have positive measure. Let us set
\[f_1(s)=\begin{cases}
\frac{1}{3} & \text{if $s\in A$} \\
\frac{2}{3} & \text{if $s\in A^c$}
\end{cases}, \ 
f_2(s)=\begin{cases}
\frac{1}{4} & \text{if $s\in A$} \\
\frac{3}{4} & \text{if $s\in A^c$}
\end{cases}.\]
Both are Borel-measurable selections of $F$ and they induce potentials, which have set-valued integrals given at any $s\in\R$ by
\[\partial J_{f_1}(s)=\Big[\frac{1}{3},\,\frac{2}{3}\Big], \ \partial J_{f_2}(s)=\Big[\frac{1}{4},\,\frac{3}{4}\Big],\]
respectively. Thus, $\partial J_{f_1}(s)\subset \partial J_{f_2}(s)$ (strictly) at all $s\in\R$. In particular, $\partial J_{f_2}$ is not minimal with respect to the ordering $\lesssim$.
\end{ex}

\noindent
We conclude this section by presenting some examples of Borel-measurable selections which can be chosen from a general u.s.c.\ set-valued mapping $F:\R\to 2^\R$ with compact convex values:
\[f_1(s)=\min F(s), \ f_2(s)=\max F(s), \ f_3(s)=\frac{1}{2}(\min F(s)+\max F(s)),\]
\[f_4(s)=\begin{cases}
\max F(s) & \text{if $s<0$} \\
\min F(s) & \text{if $s\ge 0$.}
\end{cases}\]
The mappings $f_1$, $f_2$ above represent the so-called {\em extremal selections}, which are in general appreciated in the theory of diffierential inclusions, see \cite{AF}, \cite{T}. The mapping $f_4$ was introduced in \cite{KRT} and it has the following special property related to the set-valued integral defined in \eqref{aumann}: for all $s\in\R$
\[\int_0^s f_4(\sigma)\,d\sigma=\min\int_0^s F(\sigma)\,d\sigma.\]
Finally, we would like to remark that, though we introduce a single-valued mapping $f:\R\to\R$, critical points of the resulting energy functional {\em do not}, in any sense, solve the elliptic equation
\beq\label{pde}
-\Delta u=f(u) \ \text{in $\Omega$,}
\eeq
due to the discontinuity of $f$. Indeed, nothing prevents $u$ from taking in a subset of $\Omega$ with positive measure values at which $f_-<f_+$. In order to find solutions of \eqref{pde} we need further sign assumptions which allow us to avoid the 'jumps' of $f$ (see for instance \cite{BIM}, \cite{MM1}).

\section{A case study}\label{sec5}

\noindent
In this final section we present the variational study of a problem of the type \eqref{pdi}. Namely, we will deal with the following Dirichlet problem for an elliptic differential inclusion depending on a parameter $\lambda>0$:
\beq\label{pl}
\begin{cases}
-\Delta u\in\lambda F(u) & \text{in $\Omega$} \\
u=0 & \text{on $\partial\Omega$.}
\end{cases}
\eeq
Here $\Omega$ is as in the Introduction, while $F:\R\to 2^\R$ is u.s.c.\ with compact convex values, satisfying \eqref{gc} and the following additional hypotheses:
\beq\label{zero}
\lim_{s\to 0}\max_{\xi\in F(s)}\frac{\xi}{s}=0,
\eeq
\beq\label{inf}
\limsup_{|s|\to +\infty}\min_{\xi\in F(s)}\frac{\xi}{s}=0.
\eeq
and there exists $\bar s>0$ s.t.
\beq\label{ss}
\max F(s)>0 \ \text{for all $s\in(0,\bar s]$.}
\eeq
Hypotheses \eqref{zero} and \eqref{inf} can be rephrased by saying that a convenient selection of $F$ is super-linear at 0 and sub-linear at $\infty$ (in particular, \eqref{zero} implies that \eqref{pl} always admits the zero solution), while \eqref{ss} ensures the existence of negative values for the energy associated to problem \eqref{pl}.
\vskip2pt
\noindent
Our result is an easy one, but it provides an example of how variational methods can be applied to general differential inclusions, especially when dealing with multiple solutions (for more advanced results, employing several tools in non-smooth critical point theory and non-smooth Morse theory, see \cite{BIM}, \cite{CIM}, \cite{I2}, \cite{IMM}, \cite{IP}, and \cite{KRT} where a set-valued version of the celebrated \cite[Theorem 2.15]{R1} is presented).

\begin{thm}\label{app}
Let $F$ satisfy \eqref{gc}, \eqref{zero}, \eqref{inf}, and \eqref{ss}. Then, there exists $\lambda^*>0$ s.t.\ for all $\lambda\ge\lambda^*$ problem \eqref{pl} admits at least two non-zero solutions.
\end{thm}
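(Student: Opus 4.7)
The plan is to apply the variational framework of Section \ref{sec3} to a carefully chosen Borel measurable selection $f$ of $F$, and to produce two distinct non-zero critical points of the energy
\[
\varphi_\lambda(u)=\frac{\|u\|^2}{2}-\lambda\int_\Omega J_f(u)\,dx:
\]
a global minimizer $u_1$ at a negative level and a mountain pass critical point $u_2$ at a positive level. Since Proposition \ref{vf} applies equally well with $\lambda F$ in place of $F$, both will be solutions of \eqref{pl}. The crucial point is to pick $f$ so as to exploit all three of \eqref{zero}, \eqref{inf}, \eqref{ss} simultaneously; a convenient choice is
\[
f(s)=\begin{cases}\max F(s) & \text{if } 0<s\le\bar s,\\ \min F(s) & \text{otherwise,}\end{cases}
\]
which is Borel measurable by Lemma \ref{usc-cc}.

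The main estimate is the global bound $J_f(s)\le\epsilon s^2+C_\epsilon$ for every $\epsilon>0$. For $|s|$ small, \eqref{zero} forces $\xi/\sigma\le\epsilon$ for every $\xi\in F(\sigma)$ on both sides of $0$, so any selection satisfies $J_f(s)\le\epsilon s^2/2$; for $|s|$ large, the choice $f=\min F$ together with \eqref{inf} gives $f(\sigma)/\sigma\le\epsilon$ for $|\sigma|$ large, and integration, followed by an AM--GM absorption of a linear remainder, yields the stated bound. Choosing $\epsilon$ so that $\lambda\epsilon$ is small compared to the Sobolev constant of $H^1_0\hookrightarrow L^2$ makes $\varphi_\lambda$ coercive; combined with Lemma \ref{pps} this yields \eqref{ps}. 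Combining \eqref{zero} with \eqref{gc} gives moreover, for any $\alpha\in(\max\{2,p\},2^*)$, the refined bound $J_f(s)\le\epsilon s^2/2+C_\epsilon|s|^\alpha$, whence (by $H^1_0\hookrightarrow L^\alpha$)
\[
\varphi_\lambda(u)\ge c_1\|u\|^2-c_2\|u\|^\alpha,
\]
and since $\alpha>2$ one obtains $\inf_{\|u\|=r}\varphi_\lambda>0$ for some $r>0$. Finally, \eqref{ss} combined with $f=\max F$ on $(0,\bar s]$ forces $J_f>0$ on $(0,\bar s]$, so any fixed $\tilde u\in H^1_0(\Omega)$ with $0\le\tilde u\le\bar s$ and $\tilde u\not\equiv 0$ has $\int_\Omega J_f(\tilde u)\,dx>0$; setting $\lambda^*=2\|\tilde u\|^2/\int_\Omega J_f(\tilde u)\,dx$, we get $\varphi_\lambda(\tilde u)<0$ for every $\lambda\ge\lambda^*$.

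With these ingredients, coercivity and weak lower semicontinuity of $\varphi_\lambda$ (via the compact embedding $H^1_0\hookrightarrow L^p$) yield a global minimizer $u_1$ with $\varphi_\lambda(u_1)\le\varphi_\lambda(\tilde u)<0$. Since $\varphi_\lambda>0$ on $\{\|u\|=r\}$, necessarily $\|u_1\|>r$, in particular $u_1\ne 0$, and Lemma \ref{gg} \ref{gg7} gives $u_1\in K(\varphi_\lambda)$. Theorem \ref{mpt} applied with the endpoints $0$ and $u_1$ then delivers a second critical point $u_2$ at level $c\ge\inf_{\|u\|=r}\varphi_\lambda>0$, so $u_2\ne 0,u_1$, and both solve \eqref{pl}. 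The main obstacle I expect is the construction of the selection itself: the hypotheses \eqref{zero}, \eqref{inf}, \eqref{ss} constrain different portions of the graph of $F$, and no generic selection is simultaneously super-linear at $0$, sub-linear at $\infty$, and with positive potential on $(0,\bar s]$; in particular, obtaining the sub-quadratic bound at infinity in terms of the \emph{selected} $f$ rather than of $F$ is what forces the piecewise ansatz above.
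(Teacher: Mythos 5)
Your strategy is the same as the paper's: choose a piecewise extremal Borel selection of $F$, form $\varphi_\lambda$, obtain a global minimizer at negative energy for $\lambda$ large via \eqref{ss} and coercivity, and a second critical point from Theorem \ref{mpt}, with \eqref{ps} reduced to Lemma \ref{pps} by coercivity. Your small-sphere estimate with an exponent $\alpha\in(\max\{2,p\},2^*)$ is actually a welcome refinement (the bound with exponent $p$ does not give a positive infimum on small spheres when $p\le 2$). However, your selection is wrong on the negative half-line, precisely at the point you yourself flag as delicate. Hypothesis \eqref{inf} controls $\min_{\xi\in F(s)}\frac{\xi}{s}$, and since division by a negative number reverses inequalities, this quantity equals $\frac{\min F(s)}{s}$ for $s>0$ but $\frac{\max F(s)}{s}$ for $s<0$. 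Thus \eqref{inf} yields $\min F(s)\le\delta s$ for $s$ large positive and $\max F(s)\ge\delta s$ for $s$ large negative; the selection inheriting the one-sided bound needed for $J_f(s)\le\delta s^2+C_\delta$ at $-\infty$ is therefore $\max F$ on $(-\infty,-\bar s]$, as in the paper, and your claim that ``$f=\min F$ together with \eqref{inf} gives $f(\sigma)/\sigma\le\epsilon$ for $|\sigma|$ large'' is false for $\sigma<0$.

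This is not cosmetic. Take $N=1$, $p=3$, $\bar s=1$, and
\[
F(s)=\begin{cases}
[0,\min\{s^2,1\}] & \text{if $s\ge 0$,}\\
[-s^2,0] & \text{if $s<0$.}
\end{cases}
\]
All of \eqref{gc}, \eqref{zero}, \eqref{inf}, \eqref{ss} hold, yet your selection gives $f(s)=\min F(s)=-s^2$ for $s<0$, hence $J_f(s)=|s|^3/3$ there, and $\varphi_\lambda(tv)\to-\infty$ as $t\to+\infty$ for any nonpositive $v\not\equiv 0$. Coercivity fails, $\inf\varphi_\lambda=-\infty$, the global minimizer $u_1$ does not exist, and your verification of \eqref{ps} (which rests on coercivity) collapses. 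The fix is simply to set $f=\max F$ on $(-\infty,-\bar s]$ while keeping your choices elsewhere, which is exactly the paper's four-piece selection; with that change the remainder of your argument is correct, with the minor caveat that the inference $\|u_1\|>r$ should be drawn from the positivity of the lower bound $c_1t^2-c_2t^\alpha$ on the whole punctured ball $0<\|u\|\le r$, not merely on the sphere $\|u\|=r$.
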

\begin{proof}
First we define a Borel-measurable selection $f:\R\to\R$ of $F$ by setting for all $s\ge 0$
\[f(s)=\begin{cases}
\max F(s) & \text{if $s\le -\bar s$} \\
\min F(s) & \text{if $s\in (-\bar s,0]$} \\
\max F(s) & \text{if $s\in(0,\bar s]$} \\
\min F(s) & \text{if $s>\bar s$}
\end{cases}\]
We define the locally Lipschitz continuous potential $J_f:\R\to\R$ as in Section \ref{sec3}, and for all $\lambda>0$ and $u\in H^1_0(\Omega)$ we set
\[\varphi_\lambda(u)=\frac{\|\nabla u\|_2^2}{2}-\lambda\int_\Omega J_f(u)\,dx.\]
From hypotheses \eqref{gc} and \eqref{zero} we easily deduce that, for all $\eps>0$, there exists $C_\eps>0$ s.t.\ for all $s\in\R$
\beq\label{eps}
|J_f(s)|\le\eps|s|^2+C_\eps|s|^p.
\eeq
We recall that, for all $\nu\in[1,2^*[$, the embedding $H^1_0(\Omega)\hookrightarrow L^\nu(\Omega)$ is continuous and compact, and we denote $c_\nu>0$ the best constant s.t.\ $\|u\|_\nu\le c_\nu\|u\|$ for all $u\in H^1_0(\Omega)$. Now fix $\lambda>0$. For all $r>0$ and $u\in H^1_0(\Omega)$ s.t. $\|u\|=r$, by \eqref{eps} we have
\begin{align*}
\varphi_\lambda(u) &\ge \frac{\|u\|^2}{2}-\lambda\int_\Omega(\eps u^2+C_\eps |u|^p)\,dx \\
&\ge \Big(\frac{1}{2}-\lambda\eps c_2^2\Big)r^2-\lambda C_\eps c_p^p r^p.
\end{align*}
Take $\eps<\frac{1}{2\lambda c_2^2}$. It is easily seen that the mapping $h(r):=(\frac{1}{2}-\lambda \eps c_2^2)r^2-\lambda C_\eps c_p^p r^p$ has a strict local minimum at 0, hence we have $h(r)>0$ for any $r>0$ small enough. Thus,
\beq\label{ring}
\inf_{\|u\|=r}\varphi_\lambda(u)=:\eta_r>0.
\eeq
Now we prove that $\varphi_\lambda$ is coercive in $H^1_0(\Omega)$. Indeed, by \eqref{inf}, for all $\delta>0$ we can find $M>0$ s.t. $J_f(s)\le\delta s^2$ for all $|s|>M$. Then, for all $u\in H^1_0(\Omega)$ we have
\begin{align*}
\varphi_\lambda(u) &\ge \frac{\|\nabla u\|_2^2}{2}-\lambda\int_{\{|u|\le M\}}J_f(u)\,dx-\lambda\int_{\{|u|> M\}}\delta u^2\,dx \\
&\ge \Big(\frac{1}{2}-\lambda\delta c_2^2\Big)\|u\|^2-C.
\end{align*}
Taking $\delta<\frac{1}{2\lambda c_2^2}$, we see that
\beq\label{coer}
\lim_{\|u\|\to\infty}\varphi_\lambda(u)=+\infty.
\eeq
The functional $\varphi_\lambda$ is sequentially weakly l.s.c.\ in $H^1_0(\Omega)$ as the sum of a continuous convex functional and of an integral functional which turns out to be continuous in $L^p(\Omega)$ (recall that $H^1_0(\Omega)\hookrightarrow L^p(\Omega)$ is compact). Thus, by \eqref{coer} it admits a global minimizer, i.e., we can find $u_1\in H^1_0(\Omega)$ s.t.
\beq\label{min}
\varphi_\lambda(u_1)=\inf_{u\in H^1_0(\Omega)}\varphi_\lambda(u).
\eeq
By Lemma \ref{gg} \ref{gg7} we have $u_1\in K(\varphi_\lambda)$. Now we prove that, for $\lambda>0$ big enough, $\varphi_\lambda$ takes negative values. Let $\bar u\in H^1_0(\Omega)$ be s.t. $\bar u(x)=\bar s$ in some closed ball $\bar B_\rho(\bar x)$, $\bar u(x)=0$ in $\Omega\setminus B_{2\rho}(\bar x)$, and $0\le\bar u(x)\le\bar s$ in $\Omega$ (with $\bar x\in\Omega$, $\rho>0$ s.t. $\bar B_{2\rho}(\bar x)\subset\Omega$. Then, by \eqref{ss} we have
\begin{align*}
\varphi_\lambda(\bar u) &= \frac{\|\nabla\bar u\|_2^2}{2}-\lambda\int_{B_\rho(\bar x)}J_f(\bar s)\,dx-\lambda\int_{B_{2\rho}(\bar x)\setminus\bar B_\rho(\bar x)}J_f(\bar u)\,dx \\
&\le \frac{\|\nabla\bar u\|_2^2}{2}-\lambda J_f(\bar s)|B_\rho(\bar x)|,
\end{align*}
and the latter is negative whenever $\lambda>\lambda^*$, with
\[\lambda^*=\frac{\|\nabla\bar u\|_2^2}{2J_f(\bar s)|B_\rho(\bar x)|}.\]
So we have $\varphi_\lambda(\bar u)<0$. By \eqref{min}, this implies $u_1\neq 0$, hence by Lemma \ref{vf} $u_1$ is a non-zero solution of \eqref{pl}.
\vskip2pt
\noindent
Moreover, taking $r<\|\bar u\|$, by \eqref{ring} we have
\[\max\{\varphi_\lambda(0),\,\varphi_\lambda(\bar u)\}<\eta_r,\]
hence $\varphi_\lambda$ realizes the mountain pass geometry. Let us now check that $\varphi_\lambda$ satisfies \eqref{ps}. Let $(u_n)$ be a sequence in $H^1_0(\Omega)$, s.t. $(\varphi_\lambda(u_n))$ is bounded and $m_{\varphi_\lambda}(u_n)\to 0$. By \eqref{coer}, $(u_n)$ is bounded in $H^1_0(\Omega)$, so by Lemma \ref{pps} it admits a convergent subsequence.
\vskip2pt
\noindent
From Theorem \ref{mpt} we can deduce the existence of $u_2\in K_c(\varphi_\lambda)$ with $c\ge\eta_r$, in particular $c>0$ by \eqref{ring}. So, $u_2\neq 0,u_1$ is a second non-zero solution of \eqref{pl}, which concludes the proof.
\end{proof}

\noindent
We conclude by presenting an example of a set-valued mapping satisfying the hypotheses of Theorem \ref{app}:

\begin{ex}
Let $F:\R\to 2^\R$ be defined by setting for all $s\ge 0$
\[F(s)=\begin{cases}
[0,\ln(s^2+1)] & \text{if $s\in[0,1]$} \\
[0,1] & \text{if $s=1$} \\
[\ln(s)+1,s^2] & \text{if $s>1$,}
\end{cases}\]
and $F(s)=-F(-s)$ for all $s<0$. It is easily seen that $F$ satisfies assumptions \eqref{gc}, \eqref{zero}, \eqref{inf}, and \eqref{ss} (note that $F$ is not defined as the generalized gradient of any locally Lipschitz potential). Hence, Theorem \ref{app} ensures the existence of at least two non-zero solutions of problem \eqref{pl} for all $\lambda>0$ big enough.
\end{ex}

\bigskip

\end{document}